\documentclass[11pt]{amsart}

\usepackage[T1]{fontenc}
\usepackage[utf8]{inputenc}
\usepackage{lmodern}
\usepackage{amsmath,amssymb,amsthm,mathtools}
\usepackage[a4paper,margin=1in]{geometry}
\usepackage{microtype}
\usepackage{xcolor}
\usepackage[colorlinks=true,linkcolor=blue!50!black,citecolor=blue!50!black,urlcolor=blue!50!black]{hyperref}
\usepackage{verbatim}

\numberwithin{equation}{section}

\parskip=\smallskipamount

\theoremstyle{plain}
\newtheorem{theorem}{Theorem}[section]

\newtheorem{corollary}[theorem]{Corollary}
\newtheorem{problem}[theorem]{Problem}
\newtheorem{lemma}[theorem]{Lemma}

\theoremstyle{remark}
\newtheorem{remark}[theorem]{Remark}

\begin{document} 

%
%
\title[On an extremal problem for harmonic maps conformal at a point]{On an extremal problem for harmonic maps \\ conformal at a point}

\author{Franc Forstneri{\v c} and David Kalaj}

\address{Franc Forstneri{\v c}, Faculty of Mathematics and Physics, University of Ljubljana, Jadranska 19, 1000 Ljubljana, Slovenia}

\address{Franc Forstneri{\v c}, Institute of Mathematics, Physics, and Mechanics, Jadranska 19, 1000 Ljubljana, Slovenia}

\email{franc.forstneric@fmf.uni-lj.si}

\address{David Kalaj, Faculty of Natural Sciences and Mathematics, University of Montenegro, Cetinjski put bb, 81000 Podgorica, Montenegro} 

\email{davidk@ucg.ac.me}




\subjclass[2020]{30C80; secondary 30C35, 31A05}

\date{9 May 2026. This version: 14 Seotember 2026}

\keywords{Conformal harmonic map, holomorphic map, Schwarz--Pick lemma} 

\begin{abstract} Let \(\mathbb D\) denote the unit disc in \(\mathbb C\). For a domain \(D\subset\mathbb C\) and a point \(p\in D\), let \(M_D(p)\) denote the supremum of \(\|df_0\|\) over all harmonic maps \(f:\mathbb D\to D\) with \(f(0)=p\) whose differential \(df_0\) at \(0\in\mathbb D\) is conformal. If \(f:\mathbb D\to D\) is a conformal diffeomorphism onto \(D\) with \(f(0)=p\), then \(\|df_0\|\le M_D(p)\). In a recent paper, the authors proved that equality holds when \(D=\mathbb D\), and they asked whether equality can hold only when \(D\) is a round disc. We give a negative answer by proving that, among bounded convex pointed domains \(p\in D\subset\mathbb C\), and up to translations, rotations, and reflections, equality holds if and only if, after moving \(p\) to the origin, \(D=F(\mathbb D)\), where \(F:\mathbb D\to\mathbb C\) is a holomorphic map with \(F(0)=0\) and 
\[ 
	F'(z)=\frac{c}{1+az+\lambda z^2}, 
\] 
where \(c>0\), \(|\lambda|<1\), and \(|a-\bar a\lambda|<1-|\lambda|^2\). 
This family contains strongly convex examples that are not round discs.
\bigskip 

\noindent 
R\'ESUM\'E. 
Soit \(\mathbb D\) le disque unit\'e de \(\mathbb C\). Pour un domaine \(D\subset\mathbb C\) et un point \(p\in D\), notons \(M_D(p)\) le supr\'emum de \(\|df_0\|\) sur l'ensemble des applications harmoniques \(f:\mathbb D\to D\) telles que \(f(0)=p\) et dont la diff\'erentielle \(df_0\) en \(0\in\mathbb D\) est conforme. Si \(f:\mathbb D\to D\) est un diff\'eomorphisme conforme de \(\mathbb D\) sur \(D\) tel que \(f(0)=p\), alors \(\|df_0\|\le M_D(p)\). Dans un article r\'ecent, les auteurs ont d\'emontr\'e que l'\'egalit\'e est atteinte lorsque \(D=\mathbb D\), et ils se sont demand\'e si elle ne pouvait l'\^etre que lorsque \(D\) est un disque rond. Nous apportons une r\'eponse n\'egative en d\'emontrant que, parmi les domaines convexes point\'es et born\'es \(p\in D\subset\mathbb C\), et \`a translations, rotations et r\'eflexions pr\`es, l'\'egalit\'e est atteinte si et seulement si, apr\`es avoir envoy\'e \(p\) \`a l'origine par une translation, \(D=F(\mathbb D)\), o\`u \(F:\mathbb D\to\mathbb C\) est une application holomorphe telle que \(F(0)=0\) et \[ F'(z)=\frac{c}{1+az+\lambda z^2}, \] o\`u \(c>0\), \(|\lambda|<1\) et \(|a-\bar a\lambda|<1-|\lambda|^2\). Cette famille contient des exemples fortement convexes qui ne sont pas des disques ronds.
\end{abstract}

\maketitle

%
%
%
%
\section{Introduction}

The classical Schwarz lemma and its Schwarz--Pick refinement control the
derivative of a holomorphic self-map of the unit disc
$\mathbb D=\{z\in \mathbb C: |z|<1\}$ and characterise the
extremal maps as automorphisms of the disc.
The analogous extremal problems for harmonic maps are
more subtle, especially when one imposes conformality only at a single point.
We refer to Duren~\cite{Duren}, Forstneri\v{c} and Kalaj~\cite{ForstnericKalaj2024},
and Kalaj~\cite{Kalaj2003,Kalaj2019} for related results and background.

The motivation for the present paper is the problem posed by the authors in \cite[Problem~4.3]{ForstnericKalaj2024}. Let \(D\) be a connected and
simply connected domain in the complex plane $\mathbb C$. Given
a point \(p\in D\), set
\begin{equation}\label{eq:MD}
	M_D(p) =
	\sup\bigl\{\|df_0\|:\ f:\mathbb D\to D \text{ harmonic},\ f(0)=p,\ df_0
	\text{ conformal}\bigr\}.
\end{equation}
If \(f:\mathbb D\to D\) is a holomorphic map with \(f(0)=p\), then clearly
\begin{equation}\label{eq:equality}
	|f'(0)| \le M_D(p).
\end{equation}
Assuming that $D$ is a bounded convex domain, we prove
in Theorem \ref{th:existence} that there exists an extremal harmonic map
\(f:\mathbb D\to D\) with $f(0)=p$ such that $\|df_0\|=M_D(p)$.

In the special case when $D$ is a round disc, the extremal maps for which
equality holds in \eqref{eq:equality} are conformal diffeomorphisms
of $\mathbb D$ onto $D$; see \cite[Theorem 1.1]{ForstnericKalaj2024}. 
In particular, conformality
of a harmonic map \(f:\mathbb D\to \mathbb D\) at the point $0$,
together with extremality of $\|df_0\|$, implies conformality of
$f$ at all points. This is a nontrivial generalisation of the classical
Schwarz--Pick lemma, whose proof in \cite{ForstnericKalaj2024} is
inspired by the seminal work of Lempert \cite{Lempert1981} on geodesics
of the Kobayashi metric on convex domains in $\mathbb C^n$.
The analogous result holds for harmonic maps
$\mathbb D\to \mathbb B^n$
to the ball in the real Euclidean space $\mathbb R^n$
for any $n\ge 3$, and the extremal maps are conformal harmonic
diffeomorphisms from $\mathbb D$
onto proper affine linear discs in $\mathbb B^n$;
see \cite[Theorem 2.1]{ForstnericKalaj2024}. These results led
to the introduction of a Kobayashi-type pseudometric on domains
in $\mathbb R^n$, $n\ge 3$, called the \emph{minimal metric}
since it pertains to conformal minimal surfaces;
see \cite[Sect.\ 6]{ForstnericKalaj2024} and \cite{DrinovecForstneric2023PAMQ}. 
By \cite[Theorem 6.2]{ForstnericKalaj2024}, 
the minimal metric on the ball $\mathbb B^n$ is the
Cayley--Klein metric, one of the classical models of hyperbolic geometry.

By \cite[Example 4.2]{ForstnericKalaj2024},
conformal diffeomorphisms $\mathbb D\to D$ onto more general
domains $D\subset \mathbb C$ fail to reach the extremal
value in \eqref{eq:equality} even if $D$ is convex.
The question raised in~\cite[Problem~4.3]{ForstnericKalaj2024} is 
whether a conformal diffeomorphism can attain the extremal value in 
\eqref{eq:equality} only when $D$ is a round disc.
We identify a family of bounded strongly convex pointed domains
$p\in D\subset \mathbb C$ for which a conformal diffeomorphism
$\mathbb D\to D$ satisfies equality in \eqref{eq:equality}.
Such domains will be called exceptional.

%
%
\begin{theorem}\label{th:quadratic-exceptional-family}
Let \(a,\lambda\in\mathbb C\) be such that
the quadratic polynomial
$
	q(z):=1+az+\lambda z^2
$
has no zeros on \(\overline{\mathbb D}\). Equivalently, by the Schur--Cohn
criterion (see Lemma \ref{lem:schur-cohn-quadratic}),
\begin{equation}\label{eq:SC}
	|\lambda|<1
	\quad \text{and}\quad
	|a-\bar a\lambda|<1-|\lambda|^2.
\end{equation}
Given $c>0$, let \(F:\mathbb D\to\mathbb C\) be the holomorphic
map determined by the conditions
\begin{equation}\label{eq:F}
	F(0)=0	\quad \text{and}\quad
	F'(z)=\frac{c}{1+az+\lambda z^2}.
\end{equation}
Then $F$ is a biholomorphic map onto the bounded strongly convex domain
\(D=F(\mathbb D)\), and $M_D(0)=|F'(0)|=c$. The harmonic extremal maps
attaining \(M_D(0)\) are
\[
	z\longmapsto F(e^{i\theta}z)
	\quad\text{and}\quad
	z\longmapsto F(e^{i\theta}\bar z),
	\qquad \theta\in\mathbb R.
\]
\end{theorem}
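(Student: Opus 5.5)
The plan is to view an admissible harmonic map as a holomorphic plus an antiholomorphic part and to test it against the supporting half-planes of $D$ along $\partial D$. Averaging these half-plane inequalities with a weight coming from the quadratic $q$ should kill every coefficient except the one we want to bound. Write an admissible map as $f=h+\bar g$ with $h(0)=g(0)=0$. Conformality of $df_0$ means $g'(0)=0$ (sense-preserving) or $h'(0)=0$ (sense-reversing), and then $\|df_0\|=|h'(0)|$, respectively $|g'(0)|$. The second case reduces to the first by replacing $f(z)$ with $f(\bar z)$. This accounts for the two families of extremals in the statement.

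\emph{Step 1: $F$ is a biholomorphism onto a bounded strongly convex domain.} Factor $q(z)=(1-\alpha z)(1-\beta z)$ with $|\alpha|,|\beta|<1$, which is possible by \eqref{eq:SC} and Lemma \ref{lem:schur-cohn-quadratic}. Then
\[
1+\frac{zF''(z)}{F'(z)}=1-\frac{zq'(z)}{q(z)}=\frac{1}{1-\alpha z}+\frac{1}{1-\beta z}-1 .
\]
Since $\operatorname{Re}\frac{1}{1-\alpha z}>\frac12$ on $\overline{\mathbb D}$ when $|\alpha|<1$, the real part is strictly positive on $\overline{\mathbb D}$. Moreover, $F$ extends holomorphically across $\partial\mathbb D$ with $F'\neq0$ there. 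By the classical convexity criterion, $F$ is univalent onto a convex domain. Its boundary curve $F(e^{it})$ is smooth and has strictly positive curvature, so $D$ is bounded and strongly convex.

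\emph{Step 2: the upper bound $|h'(0)|\le c$.} Convexity of $D$ says that for every $w\in D$ and every $\zeta\in\partial\mathbb D$,
\[
\operatorname{Re}\frac{F(\zeta)-w}{\zeta F'(\zeta)}\ge 0 .
\]
The key observation is that on the circle
\[
\frac{1}{\zeta F'(\zeta)}=\frac{\bar\zeta q(\zeta)}{c}=\frac{\bar\zeta+a+\lambda\zeta}{c},
\]
which is a trigonometric polynomial of degree one. Put $w=f(r\zeta)$ with $0<r<1$ and average over $\zeta\in\partial\mathbb D$. In the term containing $F$ only $F'(0)=c$ survives. In the term containing $h$ only $r\,h'(0)$ survives. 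The antiholomorphic part $\bar g$ contributes only through $\overline{g'(0)}$, and this vanishes because $g'(0)=0$. Hence
\[
0\le \operatorname{Re}\bigl(c-r\,h'(0)\bigr),
\]
and letting $r\to1$ gives $\operatorname{Re} h'(0)\le c$. Replacing $f$ by $f(e^{i\theta}z)$, which is still admissible, yields $|h'(0)|\le c$. Since $F$ itself attains $c$, we conclude $M_D(0)=c$.

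\emph{Step 3: characterisation of the extremals (the main obstacle).} Suppose $|h'(0)|=c$, and after a rotation assume $h'(0)=c$. The harmonic function
\[
u(z)=\operatorname{Re}\Bigl[\bigl(F(z)-f(z)\bigr)\,\overline{z}\,q(z)\Bigr]/c
\]
is not itself harmonic, because it contains $\bar z$. I would therefore argue via boundary values instead. Since $f$ is bounded, it has nontangential limits $f^*$ a.e. on $\partial\mathbb D$, with $f^*(\zeta)\in\overline D$. Passing to the limit $r\to1$ in the averaged identity of Step 2 is justified by dominated convergence. This shows that the nonnegative function $\operatorname{Re}\bigl[(F(\zeta)-f^*(\zeta))/(\zeta F'(\zeta))\bigr]$ has integral zero, so it vanishes a.e. Consequently $f^*(\zeta)$ lies on the supporting line of $D$ at $F(\zeta)$ and in $\overline D$. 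Strong convexity, indeed mere strict convexity, forces $f^*(\zeta)=F(\zeta)$ a.e. The Poisson representation of the bounded harmonic map $f$ then gives $f=F$. Undoing the rotation and the possible reflection yields exactly the maps $F(e^{i\theta}z)$ and $F(e^{i\theta}\bar z)$. The delicate points are the boundary limit and the precise use of strict convexity to identify $f^*$ with $F$. Both are handled by the boundedness of $f$ and the smoothness of $\partial D$ from Step 1.
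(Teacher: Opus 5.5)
Your proof is correct and follows essentially the same route as the paper. Your weight $1/(\zeta F'(\zeta))=(\bar\zeta+a+\lambda\zeta)/c$ is exactly the paper's $V(t)/c$, and the supporting half-plane inequality, the averaging that kills every coefficient except $f_z(0)$, and the strict-convexity argument forcing the boundary values to coincide with $F$ all match the paper's. The differences are minor: you prove convexity via the factorization $q(z)=(1-\alpha z)(1-\beta z)$ and $\Re\frac{1}{1-\alpha z}>\frac12$ (slightly cleaner than the paper's bound on $\omega=(p-1)/(p+1)$), and you run the inequality on circles $|z|=r$ with $f=h+\bar g$ before passing to boundary values, whereas the paper works directly with the boundary function $\psi$ and inserts the vanishing moments $\int\psi\,dt=\int e^{it}\psi\,dt=0$.
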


We obtain a bigger family of exceptional domains by applying
translations, rotations, and reflections to the maps $F$
in the above theorem. However, postcomposition with a nonrigid map does not
preserve the class of harmonic maps
used in the definition of $M_D(p)$. In particular, we are not allowed
to use holomorphic automorphisms of $D$ in general.
Hence, exceptional domains should be understood as pointed
domains $p\in D$.

%
%
The simplest noncircular example in the above family of domains
is obtained by taking $a=0$, $\lambda=\frac12$, and $c=1$.
This gives
\[
	F'(z)=\frac{1}{1+z^2/2},	\qquad
	F(z)=\sqrt2\,\arctan\!\left(\frac{z}{\sqrt2}\right);
\]
see Corollary \ref{cor:counterexample}.
The image \(F(\mathbb D)\) is not a disc, which
gives a negative answer to \cite[Problem~4.3]{ForstnericKalaj2024}.

%
%
Our next result is a converse to Theorem
\ref{th:quadratic-exceptional-family}
in the class of bounded convex domains. The two results together
classify exceptional pointed domains among bounded convex domains.

%
%
\begin{theorem}\label{th:general-convex}
Let \(\Phi:\mathbb D\to D\) be a biholomorphic map onto a
bounded convex domain with $\Phi(0)=0$
and $|\Phi'(0)|=M_D(0)$. Then, up to a rotation, $\Phi$ equals
a map $F$ from Theorem \ref{th:quadratic-exceptional-family}.
\end{theorem}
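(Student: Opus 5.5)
The plan is to derive a Lempert-type Euler--Lagrange condition for the extremal problem defining $M_D(0)$, and then to read off $\Phi'$ from it by Fourier analysis on the circle.

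\textbf{Setting up the problem.} Write a harmonic map as $f=g+\bar h$ with $g,h$ holomorphic and $h(0)=0$. Conformality of $df_0$ together with maximality of $\|df_0\|$ reduces (after a rotation) to the condition $h'(0)=0$, with $\|df_0\|=|g'(0)|$. Since $D$ is convex with support function $H$, the condition $f(\mathbb D)\subset D$ reads
\[
\operatorname{Re}\bigl(\bar\nu\, f(\zeta)\bigr)\le H(\nu)
\]
for all unit vectors $\nu$ and all boundary values $f(\zeta)$, $|\zeta|=1$. The problem is therefore a concave (linear-objective, convex-constraint) optimisation in the pair $(g,h)$. By hypothesis, $f=\Phi$, i.e.\ $g=\Phi$ and $h=0$, is a maximiser.

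\textbf{Deriving the extremality condition.} I would perturb $\Phi$ to $\Phi+t(u+\bar v)$ with $u,v$ holomorphic, $v(0)=0$ and $v'(0)=0$. At $\zeta\in\partial\mathbb D$ the active constraint has outward normal
\[
\nu(\zeta)=\frac{\zeta\Phi'(\zeta)}{|\Phi'(\zeta)|}.
\]
A Hahn--Banach/Lagrange multiplier argument, as in the Kobayashi extremal disc problem of Lempert, should produce a nonnegative measure $\mu$ on the circle such that $\bar\nu\,d\mu$ annihilates the admissible directions except the one that changes $g'(0)$. Concretely:
\begin{itemize}
\item the $u$-variations force the Fourier coefficients of $\bar\nu\,d\mu$ paired with $\zeta^n$, $n\ge0$, $n\ne1$, to vanish;
\item the $\bar v$-variations (with $v$ vanishing to second order) force the analogous conjugate coefficients to vanish.
\end{itemize}
Hence $\bar\nu\,d\mu=(A\bar\zeta+B\zeta)\,d\theta$, and $A\neq0$ because the $u'(0)$ direction realises the extremal value $c$. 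In particular, $\mu=\rho\,d\theta$ with $\rho=|A\bar\zeta+B\zeta|$ on the circle, which is $\ge0$ and absolutely continuous. Substituting the formula for $\nu$ gives
\[
\frac{\rho}{|\Phi'|}\,\overline{\Phi'(\zeta)}=A+B\zeta^2,\qquad |\zeta|=1 .
\]
I expect this step to be the main obstacle. One must justify that the supremum duality holds with a genuine multiplier, and that $\Phi'$ has enough boundary regularity for the formula for $\nu$ and the identity above to make sense. I would first try to establish these for strongly convex smooth $D$, using the existence Theorem~\ref{th:existence} and the compactness of harmonic maps into bounded convex domains. For general bounded convex $D$, I would either approximate by such domains, or use the fact that $\Phi'\in H^p$ and that $\arg\Phi'$ is monotone-controlled on the boundary for convex $\Phi$.

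\textbf{Extracting the formula for $\Phi'$.} Put $G=1/\Phi'$, which is holomorphic and zero-free on $\mathbb D$. Writing $\tilde A=\bar A$ and $\tilde B=\bar B$, the identity above says that $\tilde A\,G(\zeta)+\tilde B\,\bar\zeta^{2}G(\zeta)$ is a positive real function on the circle. Comparing Fourier coefficients of this function with those of its complex conjugate shows that the coefficients $g_k$ of $G$ satisfy
\[
\tilde A g_k+\tilde B g_{k+2}=0 \qquad\text{for } k\ge 1 .
\]
\begin{itemize}
\item If $|B|\le|A|$, then $g_{k+2}=-(\tilde A/\tilde B)\,g_k$ grows unless it vanishes, which is incompatible with the integrability of $G$. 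Hence $G$ is a polynomial of degree at most $2$.
\item If $|B|>|A|$, then $G$ is a quadratic polynomial plus a geometric tail. This tail sums to a rational function with denominator $1+(\tilde A/\tilde B)z^2$, whose zeros lie in $\mathbb D$. Combining this with the positivity of the real function above yields a contradiction. Alternatively, one can use that the zero-free condition on $G$ collapses this case back to degree $\le2$.
\end{itemize}
Thus $\Phi'(z)=c'/(1+az+\lambda z^2)$. Multiplying $\Phi$ by a unimodular constant makes $c'=c=|\Phi'(0)|>0$. Finally, $1/\Phi'$ is zero-free on $\mathbb D$ and (by the boundary analysis) on $\overline{\mathbb D}$, so $q(z)=1+az+\lambda z^2$ has no zeros on $\overline{\mathbb D}$. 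By Lemma~\ref{lem:schur-cohn-quadratic}, the parameters then satisfy \eqref{eq:SC}, and $\Phi$ is one of the maps $F$ of Theorem~\ref{th:quadratic-exceptional-family}.
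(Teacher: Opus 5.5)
Your overall strategy (an Euler--Lagrange condition for the boundary values, then a Hardy-space argument) is the same as the paper's. However, there is a genuine gap in the multiplier itself.

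\textbf{The missing constant mode.} The problem defining $M_D(0)$ imposes $f(0)=0$. So in $f=g+\bar h$ you have $g(0)=0$, and the holomorphic variation $u$ must satisfy $u(0)=0$. You let $u(0)$ be arbitrary (your list includes $n=0$), and this kills the constant Fourier mode of $\bar\nu\,d\mu$. The correct conclusion is
\[
\bar\nu\,d\mu=(A\bar\zeta+C+B\zeta)\,d\theta ,
\]
where $C$ is exactly the multiplier of the constraint $f(0)=0$. After normalising $A$, it becomes the coefficient $a$ in $q(z)=1+az+\lambda z^2$.

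With your two-mode version, multiply your boundary identity by $\Phi'$. This gives $(A+B\zeta^2)\Phi'(\zeta)=\rho\,|\Phi'(\zeta)|\ge0$ on $\partial\mathbb D$, so the $H^1$ function $(A+Bz^2)\Phi'$ is constant and $\Phi'=c/(A+Bz^2)$. In other words, your argument would prove $a=0$. That is false: by Theorem~\ref{th:quadratic-exceptional-family} with $\lambda=0$ and $0<|a|<1$, the map $F(z)=\frac{c}{a}\log(1+az)$ is extremal. The linear term $az$ in your final formula does not follow from anything you derived.

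The recurrence is also mis-indexed. Reality of $(\tilde A+\tilde B\bar\zeta^2)G$ yields $\tilde A g_k+\tilde B g_{k+2}=0$ only for $k\ge3$, with inhomogeneous relations at $k=1,2$. In any case, the detour through $1/\Phi'$ and the case split on $|B|$ versus $|A|$ are unnecessary. Once the correct multiplier is available, $q\Phi'$ has positive boundary values and lies in $H^1$, hence is constant.

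\textbf{Existence of the multiplier.} The step you flag as the main obstacle is left unresolved. Approximating by smooth strongly convex domains does not help as stated, because the hypothesis $|\Phi'(0)|=M_D(0)$ is not inherited by the Riemann maps of the approximating domains.

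The paper avoids infinite-dimensional duality entirely. It keeps the pointwise constraint $\psi(e^{it})\in\overline D$ inside the convex set $K$. It dualises only the two complex linear constraints $A_0(\psi)=A_1(\psi)=0$, i.e.\ $f(0)=0$ and $f_{\bar z}(0)=0$.
\begin{itemize}
\item The image of $K$ under $(A_0,A_1,J)$ is convex in $\mathbb C^2\times\mathbb R$ and misses the vertical ray above $(0,0,J(\psi_0))$.
\item Finite-dimensional separation gives a functional whose $J$-coefficient is positive, because $(0,0)$ is interior to the projection (take $\psi=u+ve^{-it}$ with $|u|+|v|$ small).
\item Thus $\psi_0$ maximises $\Re\frac1{2\pi}\int_0^{2\pi}(e^{-it}+a+\lambda e^{it})\psi\,dt$ over $K$.
\item Since $K$ is defined pointwise, $\varphi(t)=\Phi(e^{it})$ maximises $\Re(V(t)w)$ over $\overline D$. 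Comparing with $w=\varphi(t\pm h)$ gives $\Re(V\dot\varphi)=0$ a.e.
\end{itemize}
This uses only the absolute continuity of $\varphi$. No outward normal, boundary measure, or extra regularity of $\Phi'$ is needed.

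\textbf{Zeros of $q$ on the circle.} You attribute their exclusion to ``the boundary analysis''. It actually follows from boundedness of $D$: a simple or double zero at $\zeta$ would make $\Phi(r\zeta)$ blow up logarithmically or like a pole as $r\uparrow1$.
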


Theorem \ref{th:general-convex} is proved in Sect.\ \ref{sec:equality}.
The proof is based on the boundary value formulation of the
harmonic extremal problem.

It would be interesting to consider the analogous problem
for harmonic maps $\mathbb D\to D$ to domains
$D\subset \mathbb R^n$ for $n\ge 3$.
Let $z=x+iy$ be the coordinate on $\mathbb C$.
Given a point $p\in D$ and a tangent vector
$v\in \mathbb R^n\setminus \{0\}$, one defines
$M_D(p,v)$ as the supremum of the numbers $\|df_0\|$
over all harmonic maps $f:\mathbb D\to D$ with
$f(0)=p$, $f_x(0)=\lambda v$ for some $\lambda> 0$,
and $df_0:\mathbb R^2\to\mathbb R^n$ a conformal linear map.

\begin{problem}
For which bounded convex pointed domains
$p\in D\subset \mathbb R^n$, $n\ge 3$,
and vectors $v\in \mathbb R^n$ is the supremum
$M_D(p,v)$ reached by a conformal harmonic map
$f:\mathbb D\to D$?
\end{problem}

By \cite[Theorem 2.1]{ForstnericKalaj2024}, the answer is affirmative when
$p\in D=\mathbb B^n$ for any $v\in \mathbb R^n\setminus\{0\}$.

%
%
%
%
\section{Existence of extremal maps}\label{sec:existence}

Recall that for a domain $D\subset \mathbb C$
and a point $p\in D$, $M_D(p)$ is defined by \eqref{eq:MD}.
By $\operatorname{Per}(D)$ we denote the perimeter of a
bounded domain $D$. In this section we prove the following result.

\begin{theorem}\label{th:existence}
Let \(D\subset\mathbb C\) be a bounded convex domain and
\(p\in D\). There exists a harmonic map \(f:\mathbb D\to D\)
such that $f(0)=p$ and $\|df_0\|=M_D(p)$. Furthermore,
\[
	M_D(p)\le \frac{\operatorname{Per}(D)}{2\pi}.
\]
\end{theorem}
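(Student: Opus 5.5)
The plan is to write every harmonic map $f:\mathbb D\to D$ as $f=h+\bar g$, with $h,g$ holomorphic on $\mathbb D$ and $g(0)=0$. Then $f_z(0)=h'(0)$ and $f_{\bar z}(0)=\overline{g'(0)}$, so $df_0$ is conformal (orientation preserving) exactly when $g'(0)=0$, in which case $\|df_0\|=|h'(0)|$. If instead $h'(0)=0$ (the anticonformal case), then $\|df_0\|=|g'(0)|$. Both cases appear in the definition of $M_D(p)$, and composing with $z\mapsto\bar z$ swaps them, so it suffices to treat the orientation preserving case.

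Since $D$ is bounded, say $D\subset D(p,R)$, the harmonic map $f-p$ is a bounded map into the disc of radius $R$. The Cauchy estimates, or the Poisson representation, give $|h'(0)|\le 2R$ or a similar bound, so $M_D(p)<\infty$. For existence, take a maximizing sequence $f_k$ with $f_k(0)=p$ and $\|d(f_k)_0\|\to M_D(p)$. The $f_k$ are uniformly bounded harmonic maps, so by Harnack and Montel type compactness a subsequence converges locally uniformly, together with all derivatives, to a harmonic map $f:\mathbb D\to\overline D$. The limit has $f(0)=p$, $df_0$ conformal (a closed condition), and $\|df_0\|=M_D(p)$. We need $M_D(p)>0$, which holds because a small disc around $p$ gives a conformal affine map.

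The remaining point, and the main obstacle, is that $f$ maps into $D$ and not merely into $\overline D$. Here convexity is used. For any supporting half-plane $H=\{w:\operatorname{Re}(e^{-i\alpha}(w-w_0))<0\}\supset D$, the function $u=\operatorname{Re}(e^{-i\alpha}(f-w_0))$ is harmonic, satisfies $u\le 0$, and has $u(0)<0$ because $p\in D$. By the maximum principle $u<0$ everywhere. Since the open convex set $D$ is the intersection of its open supporting half-planes, and every point of $\partial D$ lies on some supporting line, $f(\mathbb D)\subset D$.

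For the perimeter bound, let $f$ be admissible with $f=h+\bar g$ and $g'(0)=0$. For $r<1$ we have
\[
h'(0)=\frac{1}{2\pi i}\int_{|z|=r}\frac{h(z)}{z^{2}}\,dz ,
\]
and the corresponding coefficient of $\bar g$ vanishes, because $\int_{|z|=r}\overline{g(z)}\,z^{-2}\,dz$ involves $\overline{g'(0)}=0$. So $h'(0)r=\frac1{2\pi}\int_0^{2\pi}f(re^{it})e^{-it}\,dt$. We may replace $f$ by $f-w$ for any constant $w$. We then pick the direction $e^{i\beta}$ with $e^{-i\beta}h'(0)=|h'(0)|$, which gives
\[
r|h'(0)|=\frac1{2\pi}\int_0^{2\pi}\operatorname{Re}\bigl(e^{-i\beta}e^{-it}f(re^{it})\bigr)\,dt\le\frac1{2\pi}\int_0^{2\pi}\sigma_D(e^{i(t+\beta)})\,dt ,
\]
where $\sigma_D(\zeta)=\sup_{w\in D}\operatorname{Re}(\bar\zeta w)$ is the support function. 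By Cauchy's formula for convex sets, $\int_0^{2\pi}\sigma_D(e^{i\theta})\,d\theta=\operatorname{Per}(D)$. Letting $r\to1$ and taking the supremum over $f$ then yields $M_D(p)\le \operatorname{Per}(D)/(2\pi)$.
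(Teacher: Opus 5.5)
Your proof is correct. The maximum-principle step that moves the extremal from $\overline D$ into $D$ matches the paper's, and so does the support-function bound via Cauchy's perimeter formula. The real difference is how you get compactness.

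\textbf{The paper's route.} It passes to boundary data. After normalizing $p=0$ and $f_z(0)>0$, it identifies admissible maps with the set $\mathcal A_D$ of $L^\infty$ functions with values in $\overline D$ and vanishing zeroth and first moments. Since $\overline D$ is convex and closed, it is an intersection of closed half-planes, so $\mathcal A_D$ is weak-* compact. The functional $\Re L$ is weak-* continuous, so it attains its maximum at some $\psi_0$. The Poisson integral of $\psi_0$ is then shown to map into $D$. This route also needs the fact that every bounded harmonic map is the Poisson integral of its a.e.\ boundary values.

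\textbf{Your route.} You take a maximizing sequence of harmonic maps and extract a locally uniformly convergent subsequence, with derivatives. The limit lands in $\overline D$ automatically. Convexity is therefore needed only at the final step, and you never use Fatou boundary values. Likewise, you prove the perimeter bound on circles $|z|=r<1$ and let $r\to1$, and you rotate the functional by $e^{-i\beta}$ instead of precomposing with a rotation of $\mathbb D$.

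\textbf{Comparison.} Your argument is more elementary and self-contained; the paper itself notes in passing that normal families would work. The paper's boundary-value formulation, a linear functional on a convex set of boundary functions, pays off later. It is reused in the proof of Theorem~\ref{th:quadratic-exceptional-family} and in the separation argument of Section~\ref{sec:equality}.

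\textbf{A small inaccuracy.} The vanishing of $\int_{|z|=r}\overline{g(z)}\,z^{-2}\,dz$ has nothing to do with $g'(0)=0$. It holds for every holomorphic $g$, because $\overline{g(re^{it})}\,e^{-it}$ has only negative frequencies. The hypothesis $g'(0)=0$ is used elsewhere: it gives $\|df_0\|=|h'(0)|+|g'(0)|=|h'(0)|$, which identifies the quantity you are bounding.
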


\begin{proof}
Let $z=x+iy$ be the complex coordinate on $\mathbb C$.
Given a differentiable function $f$ on a domain in $\mathbb C$,
we denote
\[
	f_z=\frac12(f_x-if_y), \qquad f_{\bar z}=\frac12(f_x+if_y).
\]
Then, $df = f_z dz + f_{\bar z}d\bar z$.
The condition $f_{\bar z}(z)=0$ means that $f$ is conformal and orientation
preserving at $z$, and in this case, $f_z(z)=f'(z)$ is the complex
derivative of $f$ at $z$.

By a translation of $D$ we may assume that $p=0$.
Let \(\mathcal A_D\) denote the set of all functions
\(\psi\in L^\infty([0,2\pi])\) such that
\[
	\psi(t)\in \bar{D} \quad \text{for a.e. }t
\]
and
\[
	\int_0^{2\pi}\psi(t)\,dt=0,
	\qquad
	\int_0^{2\pi} e^{it}\psi(t)\,dt=0.
\]
We may think of \(\psi\in\mathcal A_D\) as a function
on the circle $\partial \mathbb D=\{e^{it}:t\in [0,2\pi]\}$.
Let \(f=P[\psi]:\mathbb D\to \mathbb C\)
denote its Poisson extension. Then \(f\) is harmonic, and the above
conditions on $\psi$ imply
\[
	f(0)=0,
	\qquad
	f_{\bar z}(0)=0.
\]
Hence \(df_0\) is conformal and orientation-preserving.
Conversely, after replacing \(f(z)\) by \(f(\bar z)\), 
if necessary, every admissible harmonic map $\mathbb D\to D$
arises in this way.

After precomposing with a rotation of \(\mathbb D\),
we may assume that \(f_z(0)>0\), so
\[
	\|df_0\|=f_z(0).
\]
Thus
\[
	M_D(0)=\sup_{\psi\in\mathcal A_D}\Re L(\psi),
	\qquad
	L(\psi):=f_z(0)=\frac1{2\pi}\int_0^{2\pi} e^{-it}\psi(t)\,dt.
\]
Let \(h_D\) be the support function of \(D\),
\[
	h_D(t):=\sup_{w\in D}\Re(e^{-it}w),\quad t\in [0,2\pi].
\]
For every \(\psi\in\mathcal A_D\) we have \(\psi(t)\in \bar{D}\) for
almost every \(t\), and hence
\[
	\Re(e^{-it}\psi(t))\le h_D(t)
	\qquad\text{for a.e. }t.
\]
Therefore,
\[
	\Re L(\psi)
	=
	\frac1{2\pi}\int_0^{2\pi}\Re(e^{-it}\psi(t))\,dt
	\le
	\frac1{2\pi}\int_0^{2\pi} h_D(t)\,dt.
\]
Taking the supremum over \(\psi\in\mathcal A_D\) gives
\[
	M_D(0)\le \frac1{2\pi}\int_0^{2\pi} h_D(t)\,dt.
\]
By Cauchy's perimeter formula for bounded convex planar domains
(see \cite{Santalo1976}),
\[
	\int_0^{2\pi} h_D(t)\,dt=\operatorname{Per}(D),
\]
and hence
\[
	M_D(0)\le \frac{\operatorname{Per}(D)}{2\pi}.
\]

Since \(D\) is bounded, \(\mathcal A_D\) is contained in a weak-*
compact ball of \(L^\infty\). Because \(\bar D\) is convex and closed,
it is an intersection of closed halfplanes,
so \(\mathcal A_D\) is weak-* closed and hence weak-* compact.
Since \(\Re L\) is weak-* continuous,
there exists \(\psi_0\in\mathcal A_D\) such that
\[
	M_D(0)=\Re L(\psi_0).
\]
(The existence of such $\psi_0$ can also be seen by a
normal families argument.)
Let \(f_0=P[\psi_0]\) denote its Poisson integral.
We claim that \(f_0(\mathbb D)\subset D\). Let
\[
	H_{\beta,c}:=\{w\in\mathbb C:\ \Re(e^{-i\beta}w)<c\}
\]
be any open supporting halfplane of \(D\).
Since \(0\in D\), we have \(c>0\). Since the a.e.\ boundary values of
\(f_0\) lie in \(\overline D\subset \overline{H_{\beta,c}}\), the harmonic
function
\[
	u(z):=\Re(e^{-i\beta}f_0(z))-c
\]
satisfies \(u\le 0\) on \(\mathbb D\), while
\[
	u(0)=\Re(e^{-i\beta}\cdot 0)-c=-c<0.
\]
By the maximum principle, \(u<0\) on \(\mathbb D\).
Since this holds for every supporting halfplane
of \(D\), we conclude that
\[
	f_0(\mathbb D)\subset D.
\]
Therefore \(f_0\) is admissible and \(\|df_0\|=M_D(0)\).
\end{proof}

%
%
%
%
\section{Exceptional domains and a
noncircular example}\label{sec:exceptional}

In this section we prove Theorem \ref{th:quadratic-exceptional-family}.
We begin with preliminaries. We shall use the following form of the
classical Schur--Cohn criterion \cite{Schur1918,CohnA1922}.

\begin{lemma}\label{lem:schur-cohn-quadratic}
The quadratic holomorphic map
\begin{equation}\label{eq:q}
	q(z)=1+az+\lambda z^2,
	\qquad a,\lambda\in\mathbb C
\end{equation}
has no zeros on \(\overline{\mathbb D}\) if and only if
\begin{equation}\label{eq:SC1}
	|\lambda|<1 \quad \text{and} \quad	
	|a-\bar a\lambda|<1-|\lambda|^2 .
\end{equation}
\end{lemma}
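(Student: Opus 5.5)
The plan is to pass to the reversed polynomial and apply the one-step Schur--Cohn reduction. Since \(q(0)=1\neq0\), \(q\) has no zeros on \(\overline{\mathbb D}\) if and only if the reversed polynomial \(q^*(z):=z^2\overline{q(1/\bar z)}=\bar\lambda+\bar a z+z^2\) has both zeros in the open disc \(\mathbb D\). In the degenerate case \(\lambda=0\), \(q(z)=1+az\) has its only zero at \(-1/a\). That zero lies outside \(\overline{\mathbb D}\) exactly when \(|a|<1\), which agrees with \eqref{eq:SC1} for \(\lambda=0\). So the real content is the case \(\lambda\ne0\).

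\emph{Necessity of \(|\lambda|<1\).} If \(q\) has no zeros on \(\overline{\mathbb D}\) and \(\lambda\neq0\), write \(q(z)=\lambda(z-z_1)(z-z_2)\) with \(|z_1|,|z_2|>1\). Evaluating at \(0\) gives \(1=\lambda z_1z_2\), so \(|\lambda|=1/|z_1z_2|<1\).

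\emph{Reduction step.} Assuming \(|\lambda|<1\), consider on \(\partial\mathbb D\) the combination
\[
	r(z):=q(z)-\lambda\, q^*(z) .
\]
We have \(|q^*|=|q|\) on \(\partial\mathbb D\), and \(|\lambda|<1\). Hence \(q\) and \(r\) have the same number of zeros in \(\mathbb D\) by Rouché, provided \(q\) has no zeros on the circle. Conversely, zeros of \(q\) on \(\partial\mathbb D\) are zeros of \(r\) as well. Computing,
\[
	r(z)=1+az+\lambda z^2-\lambda(\bar\lambda+\bar a z+z^2)=(1-|\lambda|^2)+(a-\lambda\bar a)z,
\]
which is linear. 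Its zero lies outside \(\overline{\mathbb D}\) if and only if \(|a-\bar a\lambda|<1-|\lambda|^2\). The main care needed is in this equivalence with boundary zeros excluded.

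\emph{Forward direction.} If \(q\) has no zeros on \(\overline{\mathbb D}\), Rouché gives that \(r\) has none in \(\mathbb D\). Also \(r\) has none on \(\partial\mathbb D\), because on the circle \(|r|\ge(1-|\lambda|)|q|>0\). This yields \eqref{eq:SC1}.

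\emph{Converse.} Assume \eqref{eq:SC1}, so \(r\) has no zeros on \(\overline{\mathbb D}\). I would use the homotopy \(q_s:=r+s\lambda q^*\), \(s\in[0,1]\). On \(\partial\mathbb D\), \(|q_s|\ge |r|-|\lambda||q^*|\). This is awkward because \(|q^*|=|q|\) is not obviously controlled by \(|r|\).

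The cleaner route is to solve for \(q\) in terms of \(r\) and \(r^*\). Since \(r^*=q^*-\bar\lambda q\), we get
\[
	(1-|\lambda|^2)\,q=r+\lambda r^* .
\]
On \(\partial\mathbb D\), \(|r^*|=|r|>0\) and \(|\lambda|<1\). Hence \(q\) has no zeros on the circle, and by Rouché \(q\) has as many zeros in \(\mathbb D\) as \(r\), namely none. The main obstacle is exactly this symmetric inversion, which makes both directions work via Rouché. The rest is the routine computation of \(r\) and \(r^*\).
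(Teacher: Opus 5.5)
Your argument is correct, and it takes a more self-contained route than the paper. The paper's proof substitutes $w=1/z$, so the condition becomes ``both zeros of $P(w)=w^2+aw+\lambda$ lie in $\mathbb D$''. It then cites the classical Schur--Cohn criterion, asserting without computation that for a monic quadratic this criterion is exactly \eqref{eq:SC1}.

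You instead prove the one-step Schur--Cohn reduction directly for $q$:
\begin{itemize}
\item Necessity of $|\lambda|<1$ follows from $\lambda z_1z_2=q(0)=1$.
\item The Schur transform $r=q-\lambda q^*=(1-|\lambda|^2)+(a-\bar a\lambda)z$ is linear, so the location of its zero is exactly the second inequality in \eqref{eq:SC1}.
\item Both directions then follow from Rouch\'e's theorem, applied once to $r=q-\lambda q^*$ and once to the inverted identity $(1-|\lambda|^2)q=r+\lambda r^*$. Here $r^*$ is the degree-two reversal, and you use $|q^*|=|q|$ and $|r^*|=|r|$ on $\partial\mathbb D$.
\end{itemize}
These steps check out, including the handling of zeros on the circle. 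The bound $|r|\ge(1-|\lambda|)|q|$ covers one direction, and $(1-|\lambda|^2)|q|\ge(1-|\lambda|)|r|$ covers the other.

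Your version verifies, rather than asserts, that the Schur--Cohn conditions specialise to \eqref{eq:SC1} with the correct strict inequalities for the closed disc. The paper's version is shorter but leaves that translation to the reader.

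Two minor remarks. The separate case $\lambda=0$ is unnecessary, since then $r=q$ and the general argument runs unchanged. The abandoned homotopy $q_s$ can simply be deleted.
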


\begin{proof}
Put
\[
	P(w)=w^2+aw+\lambda.
\]
If \(z\ne0\) and \(w=1/z\), then
\[
	q(z)=0
	\quad\Longleftrightarrow\quad
	P(w)=0.
\]
Thus \(q\) has no zeros on \(\overline{\mathbb D}\) if and only if both zeros
of \(P\) lie in \(\mathbb D\). For the monic quadratic
polynomial $P(w)=w^2+aw+\lambda$,
the Schur--Cohn criterion is precisely \eqref{eq:SC1}.
\end{proof}

\begin{lemma}\label{lem:quadratic-convexity}
Assume that the numbers \(a,\lambda\in\mathbb C\) satisfy
\[
	|\lambda|<1
	\quad \text{and}\quad
	|a-\bar a\lambda| \le 1-|\lambda|^2,
\]
and let $F:\mathbb D\to\mathbb C$ be such that
\[
 	F'(z)=\frac{c}{1+az+\lambda z^2},\qquad c>0.
\]
Then \(F\) maps \(\mathbb D\) biholomorphically onto the convex
domain $D=F(\mathbb D)$. If in addition
\[
	|a-\bar a\lambda| < 1-|\lambda|^2,
\]
then $D=F(\mathbb D)$ is a bounded strongly convex domain.
\end{lemma}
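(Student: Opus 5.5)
The plan is to use the classical analytic criterion for convexity: a locally univalent holomorphic $F$ on $\mathbb D$ maps $\mathbb D$ univalently onto a convex domain if and only if
\[
	\Re\Bigl(1+\frac{zF''(z)}{F'(z)}\Bigr)>0 \qquad (z\in\mathbb D).
\]
Since $F'=c/q$, we have $zF''/F'=-zq'(z)/q(z)$, and so
\[
	1+\frac{zF''}{F'}=\frac{q(z)-zq'(z)}{q(z)}=\frac{1-\lambda z^2}{1+az+\lambda z^2}.
\]
First we check that $q$ has no zeros in $\mathbb D$ under the weak inequality. Take $P(w)=w^2+aw+\lambda$ as in Lemma \ref{lem:schur-cohn-quadratic}. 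The standard closed Schur--Cohn statement, or a continuity argument applied to the strict case, shows that the roots of $P$ lie in $\overline{\mathbb D}$. Hence $q$ has no zeros in $\mathbb D$, $F'$ is holomorphic and nonvanishing there, and $F$ is well defined and locally univalent.

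The key step is to show that $\Re\frac{1-\lambda z^2}{q(z)}>0$ on $\mathbb D$, which by the minimum principle it suffices to check on the boundary. Write $z=e^{it}$ and multiply numerator and denominator by $\bar z$:
\[
	\frac{1-\lambda z^2}{q(z)}=\frac{\bar z-\lambda z}{\bar z+a+\lambda z}.
\]
The real part has the sign of $\Re\bigl((\bar z-\lambda z)\overline{(\bar z+a+\lambda z)}\bigr)$. Expanding with $|z|=1$ gives
\[
	1-|\lambda|^2+\Re\bigl(\bar z\bar a-\lambda z\bar a\bigr)+\Re(\bar z^2\bar\lambda-\lambda z^2).
\]
The last term vanishes because $\lambda z^2-\overline{\lambda z^2}$ is purely imaginary. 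The middle term equals $\Re\bigl(\bar z\,(\bar a-\overline{\bar a\lambda})\bigr)$, up to conjugation, since $\Re(\lambda z\bar a)=\Re(\bar\lambda\bar z a)$. Its absolute value is therefore at most $|a-\bar a\lambda|$. Consequently the numerator is $\ge 1-|\lambda|^2-|a-\bar a\lambda|\ge0$ on $\partial\mathbb D$.

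Some care is needed at boundary zeros of $q$ in the weak case. Those points I would handle by running the argument on circles of radius $r<1$, or by noting that the harmonic function $\Re\frac{1-\lambda z^2}{q}$ is a nonconstant limit of functions $\ge0$. It is not identically zero, since its value at $0$ is $1$, so the strict positivity inside $\mathbb D$ follows from the minimum principle or Harnack. This gives univalence and convexity of $D$.

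For the strict case the roots of $q$ lie outside $\overline{\mathbb D}$, so $F'=c/q$ extends holomorphically and nonvanishing to a neighbourhood of $\overline{\mathbb D}$. Then $F$ is bounded, and $\partial D=F(\partial\mathbb D)$ is a real-analytic regular curve. The curvature of the boundary curve $t\mapsto F(e^{it})$ equals
\[
	\frac{\Re\bigl(1+zF''/F'\bigr)}{|F'(z)|}.
\]
The computation above shows that this is bounded below by $(1-|\lambda|^2-|a-\bar a\lambda|)/\max|F'|>0$, which gives strong convexity. The main obstacle I anticipate is the boundary algebra, namely the cancellation of the $\lambda z^2$ terms and the correct identification of the coefficient of $\bar z$ as $a-\bar a\lambda$ up to conjugation, together with the degenerate boundary behaviour in the weak-inequality case.
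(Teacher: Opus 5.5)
Your proof is correct and shares the paper's skeleton: Study's criterion applied to $p=1+zF''/F'=(1-\lambda z^2)/q$, with the same boundary quantity. On $|z|=1$ your numerator $1-|\lambda|^2+\Re\bigl((a-\bar a\lambda)z\bigr)$ equals $\tfrac14\bigl(|2+az|^2-|a+2\lambda z|^2\bigr)$, which is what the paper computes for $\omega=(p-1)/(p+1)=-z(a+2\lambda z)/(2+az)$.

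The real difference is how you treat zeros of $q$ on $\partial\mathbb D$ in the non-strict case. The paper's Cayley transform handles them for free. Since $|a|\le 1+|\lambda|<2$, $\omega$ is holomorphic near $\overline{\mathbb D}$, a boundary pole of $p$ is just a point where $\omega=1$, and the maximum principle for $|\omega|$ applies directly. Working with $\Re p$ itself, you genuinely need a limiting argument, because boundary poles can defeat the minimum principle: $-(1+z)/(1-z)$ has zero real part on $\partial\mathbb D\setminus\{1\}$ and negative real part inside. Your sketch should therefore name the approximating family. The simplest choice is to replace $a$ by $ta$ with $t\uparrow 1$. This gives $|ta-t\bar a\lambda|<1-|\lambda|^2$, so the strict case yields $\Re p_t>0$ on $\mathbb D$, and $p_t\to p$ locally uniformly because $q\ne0$ on $\mathbb D$. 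The dilation $F(rz)$ also works, since its parameters $(ra,r^2\lambda)$ satisfy the strict inequality: the roots of $w^2+raw+r^2\lambda$ are $r$ times those of $P$. Your boundary algebra uses $z\bar z=1$, however, so it cannot be run verbatim on $|z|=r$.

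In the strict case you are more explicit than the paper, which only appeals to a standard criterion; your curvature formula $\kappa=\Re p/|F'|$ is the right tool. One slip: your computation bounds $|q|^2\Re p$, not $\Re p$. The lower bound it actually yields is $\kappa=N/(c|q|)\ge(1-|\lambda|^2-|a-\bar a\lambda|)/(c\max_{\partial\mathbb D}|q|)$, not $(1-|\lambda|^2-|a-\bar a\lambda|)/\max|F'|$ as you wrote. This does not affect the positivity of $\kappa$, which is all you need.
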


\begin{proof}
By the non-strict form of the Schur--Cohn criterion, the assumptions
on $a$ and $\lambda$ imply that the polynomial
$1+az+\lambda z^2$ has no zeros in $\mathbb D$. 
Hence, $F'$ is holomorphic
and nonzero in $\mathbb D$. By the standard convexity criterion for
holomorphic functions, it suffices to prove that
\begin{equation}\label{eq:F''}
	\Re\!\left(1+z\frac{F''(z)}{F'(z)}\right)\ge 0,
	\qquad z\in\mathbb D.
\end{equation}
A computation gives
\begin{equation}\label{eq:pz}
		p(z):= 1+z\frac{F''(z)}{F'(z)}
	= \frac{1-\lambda z^2}{1+az+\lambda z^2}.
\end{equation}
Set
\[
	\omega(z):=\frac{p(z)-1}{p(z)+1}=-\,\frac{z(a+2\lambda z)}{2+az}.
\]
The assumption
$
	|a-\bar a\lambda|\le 1-|\lambda|^2
$
implies
\[
	|a|(1-|\lambda|)
	\le |a-\bar a\lambda|
	\le 1-|\lambda|^2.
\]
It follows that $|a|\le 1+|\lambda|<2$ and hence
$2+az\neq0$ for $|z|\le1$, thereby showing that \(\omega\) is holomorphic
in a neighbourhood of the closed disc \(\overline{\mathbb D}\).
We claim that
\begin{equation}\label{eq:omega}
	 |\omega(z)| < 1,
	 \quad z\in \mathbb D.
\end{equation}	
For \(|z|=1\) we have
\[
	|\omega(z)|\le1
	\iff
	|a+2\lambda z|\le |2+a z|.
\]
After squaring and expanding, we have
\[
	|2+\bar a z|^2-|a+2\lambda z|^2
	=
	4(1-|\lambda|^2)
	+
	4\Re \bigl((a-\bar a\lambda)z\bigr).
\]
By the assumption
$
	|a-\bar a\lambda|\le 1-|\lambda|^2,
$
the right-hand side is nonnegative, and hence $|\omega(z)|\le1$ for $|z|=1$.
By the maximum principle, $|\omega(z)|\le 1$ for all $z\in \bar{\mathbb D}$.
Since \(\omega(0)=0\), the function \(\omega\) is not identically equal to a
unimodular constant, which implies \eqref{eq:omega}. Therefore, the function
\[
	p(z)=\frac{1+\omega(z)}{1-\omega(z)},\qquad z\in \mathbb D,
\]
has positive real part in \(\mathbb D\). In view of \eqref{eq:pz}, this
establishes \eqref{eq:F''}, so \(F\) is convex in \(\mathbb D\). If
\[
	|a-\bar a\lambda|<1-|\lambda|^2,
\]
then the boundary inequality above is strict:
\[
	|\omega(z)|<1,
	\qquad |z|=1.
\]
By the standard strict convexity criterion for analytic functions,
\(F\) maps \(\mathbb D\) conformally onto a bounded
strongly convex domain.
\end{proof}

%
%
\begin{proof}[Proof of Theorem~\ref{th:quadratic-exceptional-family}]
By Lemma~\ref{lem:schur-cohn-quadratic}, the assumption that
$
	q(z)=1+az+\lambda z^2
$
has no zeros on \(\overline{\mathbb D}\) is equivalent to
\[
|\lambda|<1,
\qquad
|a-\bar a\lambda|<1-|\lambda|^2 .
\]
In particular, \(F'\), and hence \(F\), extends holomorphically to a
neighbourhood of \(\overline{\mathbb D}\).
By Lemma~\ref{lem:quadratic-convexity}, \(F\) maps \(\mathbb D\) conformally onto a bounded strongly convex domain
$D=F(\mathbb D)$.

We now prove the extremal property $|F'(0)|=M_D(0)$.
To this end, we must show that if $f:\mathbb D\to D$ is 
a harmonic map with $f(0)=0$ such that \(df_0\) is conformal,
then $\|df_0\|\le c=|F'(0)|$. If \(df_0=0\), there is nothing to prove. 
If \(df_0\) is orientation-reversing, we replace \(f(z)\) by \(f(\bar z)\); 
this does not change \(\|df_0\|\). Thus, we may assume that
$
	f_{\bar z}(0)=0.
$
Precomposing with a rotation of \(\mathbb D\), we may also assume that
$
	f_z(0)>0.
$
Then
\[
	\|df_0\|=f_z(0).
\]
%
Since \(D\) is bounded, \(f\) has radial boundary values
$
	\psi:\partial\mathbb D\to \overline D
$
almost everywhere, and \(f=P[\psi]\) is the Poisson integral of $\psi$.
By the Poisson formula,
\[
	f(0)=\frac1{2\pi}\int_0^{2\pi}\psi(e^{it})\,dt=0
\]
and
\[
	f_z(0)
	=
	\frac1{2\pi}\int_0^{2\pi}e^{-it}\psi(e^{it})\,dt,
	\qquad
	f_{\bar z}(0)
	=
	\frac1{2\pi}\int_0^{2\pi}e^{it}\psi(e^{it})\,dt.
\]
Therefore
\[
	\|df_0\|
	=
	f_z(0)
	=
	\frac1{2\pi}
	\int_0^{2\pi}e^{-it}\psi(e^{it})\,dt.
\]
Using the constraints
\[
	\int_0^{2\pi}\psi(e^{it})\,dt=0,
	\qquad
	\int_0^{2\pi}e^{it}\psi(e^{it})\,dt=0,
\]
we may add the corresponding zero terms and obtain
\begin{equation}\label{eq:estdf0}
	\|df_0\|
	= f_{z}(0)
	=
	\frac1{2\pi} \int_0^{2\pi}
	\bigl(e^{-it}+a+\lambda e^{it}\bigr)\psi(e^{it})\,dt
	=
	\frac1{2\pi} \int_0^{2\pi}
	\bigl( V(t)\psi(e^{it})\,dt
\end{equation}
where
\begin{equation}\label{eq:V}
	V(t):=e^{-it}+a+\lambda e^{it}
	= e^{-it} q(e^{it})\ne 0,
	\qquad t\in\mathbb R.
\end{equation}
Here, $q(z)=1+az+\lambda z^2$ is as in \eqref{eq:q}.
Recall that $q$ has no zeros on $\bar{\mathbb D}$.
We claim that, for every \(t\in\mathbb R\), the real-linear functional
\[
	w\longmapsto \Re\bigl(V(t)w\bigr)
\]
is uniquely maximized over \(\overline D\) at \(w=F(e^{it})\). Indeed,
\eqref{eq:F} gives
$
	q(e^{it})F'(e^{it})=c,
$
and hence
\[
	e^{it}F'(e^{it})
	=
	\frac{c\,e^{it}}{q(e^{it})}.
\]
From this and \eqref{eq:V} we obtain
\[
	\frac{\overline{V(t)}}{e^{it}F'(e^{it})}
	= e^{it} \, \overline{q(e^{it})}\,
	    \frac{q(e^{it})}{c\, e^{it}}
	= \frac{1}{c} |q(e^{it})|^2 >0.
\]
Thus for every $t$, \(\overline{V(t)}\) is a positive real multiple of
$e^{it}F'(e^{it})$, which is the outward normal direction to the
positively oriented boundary curve
$
	t\mapsto F(e^{it}) \in bD.
$
Since \(D\) is strongly convex, the supporting functional
$
	w\mapsto \Re\bigl(V(t)w\bigr)
$
has the unique maximizer
$
	w=F(e^{it})
$
on \(\overline D\) as claimed. Therefore, for almost every \(t\),
\[
	\Re\bigl(V(t)\psi(e^{it})\bigr)
	\le
	\Re\bigl(V(t)F(e^{it})\bigr).
\]
In view of \eqref{eq:estdf0}, it follows that
\begin{equation}\label{eq:estimate}
	\|df_0\|
	\le
	\frac1{2\pi}
	\Re\int_0^{2\pi}V(t)F(e^{it})\,dt.
\end{equation}
Note that
\[
	\int_0^{2\pi}F(e^{it})\,dt= 2\pi F(0)=0.
\]
Also, since \(F\) is holomorphic in a neighbourhood
of \(\overline{\mathbb D}\),
\[
	\int_0^{2\pi}e^{it}F(e^{it})\,dt=0.
\]
Hence
\[
	\frac1{2\pi}
	\Re\int_0^{2\pi}V(t)F(e^{it})\,dt
	=
	\frac1{2\pi}
	\Re\int_0^{2\pi}e^{-it}F(e^{it})\,dt
	=
	F'(0)
	=
	c.
\]
From this and \eqref{eq:estimate} we conclude that
\[
	\|df_0\|\le c.
\]
Since \(F\) itself is admissible and
$
	\|dF_0\|=F'(0)=c,
$
it follows that
\[
	M_D(0)=c.
\]
Equality $\|df_0\|=c$ can hold only if the boundary value function
$\psi$ of $f$ satisfies
\[
	\psi(e^{it})=F(e^{it})
	\qquad\text{for a.e. }t,
\]
because, for every \(t\), the functional
$
	w\mapsto \Re\bigl(V(t)w\bigr)
$
has a unique maximizer $w=F(e^{it})$ over \(\overline D\).
Hence, by uniqueness of the Poisson extension, the only extremal
satisfying
\[
	f_{\bar z}(0)=0,
	\qquad
	f_z(0)>0
\]
is
$
	f=F.
$
Undoing rotations of the source, the orientation-preserving extremals are
precisely
\[
	F\circ R_\theta,
	\qquad
	R_\theta(z)=e^{i\theta}z,
	\quad \theta\in\mathbb R.
\]
If orientation-reversing conformal differentials are also allowed, then applying
the preceding argument to \(f(\bar z)\) gives the additional extremals
$
	z\mapsto F(e^{i\theta}\bar z).
$
\end{proof}

As an immediate consequence, we obtain a negative answer to
\cite[Problem~4.3]{ForstnericKalaj2024}.

%
%
\begin{corollary}\label{cor:counterexample}
There exists a bounded strongly convex domain
\(0\in D\subset\mathbb C\), which is not a disc, such that
the supremum of \(\|df_0\|\) over all harmonic maps
\[
	f:\mathbb D\to D,
	\qquad f(0)=0,
\]
whose differential is conformal at \(0\), is attained by a biholomorphic
map \(F:\mathbb D\to D\) with $F(0)=0$.
Moreover, among orientation-preserving extremals,
\(F\) is unique up to precomposition by rotations of \(\mathbb D\);
after the normalization \(F_z(0)>0\), it is unique.
\end{corollary}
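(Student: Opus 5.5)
The corollary follows directly from Theorem~\ref{th:quadratic-exceptional-family} applied to one explicit choice of parameters. I take $a=0$, $\lambda=\tfrac12$, $c=1$, so that $q(z)=1+z^2/2$.

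The first step is to check the Schur--Cohn conditions \eqref{eq:SC}. Here $|\lambda|=\tfrac12<1$ and $|a-\bar a\lambda|=0<\tfrac34=1-|\lambda|^2$. Alternatively, one sees directly that the zeros of $q$ are $\pm i\sqrt2$, which lie outside $\overline{\mathbb D}$. Theorem~\ref{th:quadratic-exceptional-family} then gives the following:
\begin{itemize}
\item $F(z)=\sqrt2\arctan(z/\sqrt2)$ maps $\mathbb D$ biholomorphically onto a bounded strongly convex domain $D$;
\item $M_D(0)=F'(0)=1$;
\item the extremals are exactly $F(e^{i\theta}z)$ and $F(e^{i\theta}\bar z)$.
\end{itemize}
The orientation-preserving extremals are therefore $F\circ R_\theta$. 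Their differential at $0$ is $e^{i\theta}F'(0)=e^{i\theta}$, and this is positive only for $\theta\in 2\pi\mathbb Z$. This gives the uniqueness statement.

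The only remaining point, and the only real work, is to show that $D$ is not a round disc. I argue by contradiction. Suppose $D$ is a disc. Since $F(0)=0$, we may write $F=\phi$ with $\phi(z)=\alpha\,(z-b)/(1-\bar b z)+w_0$, a Möbius map of $\mathbb D$ onto that disc; in particular $F$ would be a Möbius transformation. The coefficients of $F$ are odd, so $F$ is odd. I see two ways to finish.
\begin{itemize}
\item \emph{Symmetry.} Oddness means $D$ is symmetric about $0$, so $0$ is the centre of the disc and $F(z)=F'(0)\,e^{i\cdot}z=z$. But $F''' (0)\neq 0$, since $F(z)=z-z^3/6+\dots$, a contradiction.
\item \emph{Singularities.} A Möbius map has a derivative of the form $C/(1-\bar b z)^2$, with a double pole (or none). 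By contrast, $F'=1/(1+z^2/2)$ has two simple poles at $\pm i\sqrt2$, and this property persists under analytic continuation. So $F$ cannot be Möbius.
\end{itemize}
Either route finishes the proof. I would present the second one, since it is the cleanest.
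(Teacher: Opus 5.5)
Your proposal is correct and follows the paper's proof. You make the same choice $a=0$, $\lambda=\tfrac12$, $c=1$ in Theorem~\ref{th:quadratic-exceptional-family}, read off strong convexity, $M_D(0)=1$ and the list of extremals, and exclude a disc because $F$ would then be a M\"obius map. The only difference is that you justify two points the paper simply asserts: that $F(z)=\sqrt2\arctan(z/\sqrt2)$ is not M\"obius (via oddness, or via the two simple poles of $F'$ at $\pm i\sqrt2$), and how the normalization $F_z(0)>0$ pins down $\theta\in 2\pi\mathbb Z$.
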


\begin{proof}
Choose
\[
	\lambda=\frac12,
	\qquad a=0,
	\qquad c=1
\]
in Theorem~\ref{th:quadratic-exceptional-family}. Then
\[
	F(0)=0,
	\qquad
	F'(z)=\frac{1}{1+z^2/2},
\]
and hence
\begin{equation}\label{eq:actan}
	F(z)=\sqrt2\,\arctan\!\left(\frac{z}{\sqrt2}\right).
\end{equation}
By Theorem~\ref{th:quadratic-exceptional-family}, the image
$D=F(\mathbb D)$ is a bounded strongly convex domain, and
$
	M_D(0)=|F'(0)|=1.
$
Moreover, the extremals are precisely
those described in Theorem~\ref{th:quadratic-exceptional-family}.
If \(D\) were a disc, then the map \(F:\mathbb D\to D\)
would be a M\"obius transformation. Since the map \eqref{eq:actan}
is not a M\"obius transformation, \(D\) is not a disc.
\end{proof}

%
%
%
%
\section{Bounded convex domains: classification of equality}\label{sec:equality}

In this section we prove Theorem \ref{th:general-convex}.

\begin{proof}
Let \(\Phi:\mathbb D\to D\) be a biholomorphic map onto a
bounded convex domain $D\subset \mathbb C$ with $\Phi(0)=0$.
Rotating the target $D=\Phi(\mathbb D)$,
we may assume that $\Phi'(0)>0$. Our goal is to prove
that $\Phi$ equals a map $F$ in \eqref{eq:F} from
Theorem \ref{th:quadratic-exceptional-family}.

Since \(D\) is bounded and convex, \(\partial D\) is a rectifiable Jordan curve.
Hence, \(\Phi\) extends homeomorphically to \(\overline{\mathbb D}\),
\(\Phi'\in H^1\), and its boundary value function
\begin{equation}\label{eq:varphi}
	\varphi(t)=\psi_0(e^{it}) := \Phi(e^{it}),
	\quad t\in\mathbb R,
\end{equation}
is absolutely continuous. Moreover,
\begin{equation}\label{eq:phidot}
	\dot\varphi(t)= i e^{it}\Phi'(e^{it})
	\qquad\text{for a.e. }t.
\end{equation}
We formulate the extremal problem in terms of the boundary function. Let
\begin{equation}\label{eq:K}
	K:=
	\left\{
	\psi\in L^\infty(\partial\mathbb D, \mathbb C):
	\psi(e^{it})\in\overline D \text{ for a.e. }t
	\right\}.
\end{equation}
For \(\psi\in K\), define
\[
	A_0(\psi):=\frac1{2\pi}\int_0^{2\pi}\psi(e^{it})\,dt,
	\qquad
	A_1(\psi):=\frac1{2\pi}\int_0^{2\pi}e^{it}\psi(e^{it})\,dt,
\]
and
\[
	J(\psi):=
	\Re\frac1{2\pi}\int_0^{2\pi}e^{-it}\psi(e^{it})\,dt.
\]
Let
\[
	K_0:=\{\psi\in K:\ A_0(\psi)=0,\ A_1(\psi)=0\}.
\]
Note that $K_0$ equals the set $\mathcal A_D$ introduced in
Sect.\ \ref{sec:existence}. If \(\psi\in K_0\), its Poisson extension
\(f=P[\psi]:\mathbb D\to\mathbb C\) satisfies
$f(0)=0$, $f_{\bar z}(0)=0$, and hence \(df_0\) is an orientation
preserving conformal linear map. Moreover,
\[
	f_z(0)=\frac1{2\pi}\int_0^{2\pi}e^{-it}\psi(e^{it})\,dt.
\]
By the argument in the proof of Theorem \ref{th:existence},
we have \(f(\mathbb D)\subset D\) since $D$ is convex.

Let $\Phi$ be as above. Its boundary map
$
	\psi_0:=\Phi|_{\partial\mathbb D}
$
belongs to \(K_0\), and
\[
	J(\psi_0)=\Phi'(0).
\]
Since \(M_D(0)=\Phi'(0)\), the function \(\psi_0\) maximizes \(J\) over \(K_0\).

We now apply a finite-dimensional separation argument.
Note that \(K\) is convex because \(\overline D\) is convex:
if \(\psi_1,\psi_2\in K\) and \(0\le \theta\le1\), then
\[
	\theta\psi_1(e^{it})+(1-\theta)\psi_2(e^{it})\in \overline D
	\qquad\text{for a.e. }t.
\]
Moreover, \(A_0\), \(A_1\), and \(J\) are real-linear functionals on $K$.
Hence the image
\[
	\mathcal S=\{(A_0(\psi),A_1(\psi),J(\psi)):\psi\in K\}
	\subset \mathbb C^2\times\mathbb R
\]
is convex in \(\mathbb C^2\times\mathbb R\). Let
\[
	s_0:=(0,0,J(\psi_0)).
\]
Since \(\psi_0\) maximizes \(J\) on \(K_0\), the open vertical ray
\begin{equation}\label{eq:R}
	\mathcal R:=\{(0,0,s):s>J(\psi_0)\}
\end{equation}
is disjoint from \(\mathcal S\).
Moreover, \((0,0)\in\mathbb C^2\) is an interior point of the projection
of \(\mathcal S\) to \(\mathbb C^2\). Indeed, since \(0\in D\),
there is \(\delta>0\) such that
$\{w\in\mathbb C:|w|<\delta\}\subset D$.
Hence, whenever \(|u|+|v|<\delta\), the function
$
	\psi_{u,v}(e^{it})=u+v e^{-it}
$
belongs to \(K\), and $A_0(\psi_{u,v})=u$, $ A_1(\psi_{u,v})=v$.

Since \(\mathcal S\) is convex and the convex set
$\mathcal R$ in \eqref{eq:R} is disjoint from \(\mathcal S\),
the finite-dimensional separation theorem
(see e.g.\ \cite[Sect.\ 2.5]{BoydVandenberghe2004})
gives a nonzero real-linear functional
\begin{equation}\label{eq:ell}
	\ell(z_0,z_1,s)=\Re(\alpha z_0+\beta z_1)+\mu s
\end{equation}
on $\mathbb C^2\times\mathbb R$ and a real number \(\gamma\) such that
\[
	\ell(x)\le \gamma\le \ell(y)
	\qquad
	(x\in\mathcal S,\ y\in\mathcal R).
\]
Since \(s_0=(0,0,J(\psi_0))\in\mathcal S\), we have
\[
	\ell(s_0)\le \gamma.
\]
On the other hand, letting \(y=(0,0,s)\in\mathcal R\) and then
\(s\downarrow J(\psi_0)\), we obtain
\[
	\gamma\le \ell(s_0).
\]
Therefore
$
	\ell(s_0)=\gamma,
$
so the separating hyperplane supports \(\mathcal S\) at \(s_0\).

We claim that \(\mu>0\) in \eqref{eq:ell}. First, since
\[
	\ell(0,0,s)=\mu s\ge \gamma
	\qquad (s>J(\psi_0)),
\]
we must have \(\mu\ge0\). If \(\mu=0\), then \(\ell\) depends only on
\((z_0,z_1)\). But \(0\) is an interior point of the projection of
\(\mathcal S\) onto \(\mathbb C^2\), and a nonzero real-linear functional
cannot support a set at an interior point of its projection. Hence
\(\alpha=\beta=0\), contradicting the fact that \(\ell\) is nonzero. Thus
$\mu>0$ as claimed. Dividing by \(\mu\), we obtain constants
\(a,\lambda\in\mathbb C\) such that
\(
	\psi_0
\)
maximizes the functional
\[
	K\ni \psi\longmapsto
	\Re\frac1{2\pi}\int_0^{2\pi}
	\bigl(e^{-it}+a+\lambda e^{it}\bigr)\psi(e^{it})\,dt
	= \Re\frac1{2\pi}\int_0^{2\pi} V(t)\psi(e^{it})\,dt
\]
where 
\[
	V(t):=e^{-it}+a+\lambda e^{it} 
\]
is as in \eqref{eq:V}. Note that
\begin{equation}\label{eq:qeit}
	q(e^{it}) := e^{it}V(t)=1+a e^{it}+\lambda e^{2it}
\end{equation}
is a nonzero polynomial of degree at most two in \(e^{it}\), 
and hence \(V(t)\) has at most two zeros on \(t\in [0,2\pi)\).
Since the set \(K\) in \eqref{eq:K} is defined by pointwise
conditions $\psi(e^{it})\in\overline D$ for a.e.\ $t$, 
the fact that $\psi_0$ maximizes this integral implies, 
for almost every $t$, the pointwise support condition
\[
	\Re\bigl(V(t)w\bigr)
	\le
	\Re\bigl(V(t)\psi_0(e^{it})\bigr)
	\qquad (w\in\overline D).
\]
Since $V$, $\varphi$, and the support function of $\overline D$ are
continuous, this inequality holds for all $t$. Writing
$\varphi(t)=\psi_0(e^{it})$ (see \eqref{eq:varphi}), the support condition is
equivalent to
\[
	\Re\bigl(V(t)(w-\varphi(t))\bigr)\le0
	\qquad (w\in\overline D).
\]
Taking \(w=\varphi(t+h)\) and using both positive and negative
values of \(h\), at every
point $t$ where \(\varphi\) is differentiable (which holds for
almost every $t$, see \eqref{eq:phidot}) we get
\[
	\Re\bigl(V(t)\dot\varphi(t)\bigr)=0.
\]
In view of \eqref{eq:phidot}, this gives
\[
	\Re\bigl(V(t)\, i e^{it}\Phi'(e^{it})\bigr)=0
	\qquad\text{for a.e. }t.
\]
By \eqref{eq:qeit}, this is equivalent to
\begin{equation}\label{eq:Im}
	\Im\Bigl(\bigl(1+a e^{it}+\lambda e^{2it}\bigr)\Phi'(e^{it})\Bigr)=0
	\qquad\text{for a.e. }t.
\end{equation}
Set
\[
	G(z):=(1+a z+\lambda z^2)\Phi'(z) = q(z) \Phi'(z).
\]
Since \(\partial D\) is rectifiable, we have \(\Phi'\in H^1\) and hence
$G\in H^1$. By \eqref{eq:Im}, the harmonic function
\(\Im G\) has nontangential boundary values equal to \(0\)
almost everywhere. By uniqueness for harmonic functions in \(h^1\),
$\Im G\equiv0$. Hence \(G\) is constant,
$G(z)\equiv c=G(0)=\Phi'(0)>0$, and
\[
	\Phi'(z)=\frac{c}{q(z)} = \frac{c}{1+a z+\lambda z^2}.
\]
Since \(q\Phi'=c>0\) in \(\mathbb D\), the polynomial \(q\)
has no zeros in \(\mathbb D\). We claim that \(q\) has no zeros on
\(\partial\mathbb D\) either. Suppose to the contrary that
$q(\zeta)=0$ for some $\zeta\in \partial\mathbb D$.
Since \(q\) has degree at most two, \(\zeta\) is either a simple or a double
zero. If \(\zeta\) is a simple zero, then \(c/q(z)\) has a simple pole at \(\zeta\).
Integrating along the radius \(z=r\zeta\), \(r\uparrow1\), gives logarithmic
growth of \(\Phi(r\zeta)\), contradicting the boundedness of
the domain $\Phi(\mathbb D)=D$.
If \(\zeta\) is a double zero, then \(c/q(z)\) has a double pole at \(\zeta\).
Integrating along the same radius 
shows that \(\Phi(r\zeta)\) has pole-type growth,
again contradicting the boundedness of \(D\).
Thus, $q$ has no zeros on \(\overline{\mathbb D}\),
and $\Phi$ is of the form \eqref{eq:F} in
Theorem \ref{th:quadratic-exceptional-family}.
\end{proof}

\begin{remark}
No smoothness assumption is made in
Theorem~\ref{th:general-convex}; smoothness
and strict convexity are deduced from the equality assumption.
\end{remark}

%
%
\subsection*{Acknowledgements}
F.\ Forstneri{\v c} was supported by the European Union
(ERC Advanced grant HPDR, 101053085) and by grant P1-0291
from ARIS, Republic of Slovenia.
D.~Kalaj gratefully acknowledges financial support from the Ministry of
Education, Science and Innovation of Montenegro through the grants
\emph{``Mathematical Analysis, Optimisation and Machine Learning''}
and \emph{``Complex-analytic and geometric techniques for non-Euclidean
machine learning: theory and applications.''}

%
%
%
%
\subsection*{Declaration on the use of AI} 
The mathematical ideas, statements, and proof strategies in this article 
are due to the authors. Generative AI tools (ChatGPT, OpenAI)
were used to assist with drafting the technical arguments and
language editing.  
The authors subsequently checked and revised all such material and 
take full responsibility for the correctness and presentation of the article.

\end{document}